\newtheorem{lemma}{Lemma}
\newtheorem{theorem}[lemma]{Theorem}
\begin{document}

	\begin{center}
		{\Large \bf Some Bounds on the Energy of Graphs with Self-Loops regarding $\lambda_{1}$ and $\lambda_{n}$}  \\
		\vspace{8mm}
		
		{\large \bf Minghua Li$^a\footnote{Corresponding author}$, Yue Liu$^{a}$}
		
		\vspace{6mm}
		
		\baselineskip=0.20in
		
		{\it $^a$School of Mathematics and Statistics, Fuzhou University, \\
Fuzhou, Fujian, 350108, P. R. China\/} \\
		{\rm E-mail:} {\tt 220320048@fzu.edu.cn, liuyue@fzu.edu.cn}\\[2mm]

		\vspace{5mm}
		
	\end{center}

\begin{abstract}
 Let $G_{S}$ be a graph with $n$ vertices obtained from a simple graph $G$ by attaching one self-loop at each vertex in $S \subseteq V(G)$. The energy of $G_{S}$ is defined by Gutman et al. as $E(G_{S})=\sum_{i=1}^{n}\left| \lambda_{i} -\frac{\sigma}{n} \right|$, where $\lambda_{1},\dots,\lambda_{n}$ are the adjacency eigenvalues of $G_{S}$ and $\sigma$ is the number of self-loops of $G_{S}$. In this paper, several upper and lower bounds of $E(G_{S})$ regarding $\lambda_{1}$ and $\lambda_{n}$ are obtained. Especially, the upper bound $E(G_{S}) \leq \sqrt{n\left(2m+\sigma-\frac{\sigma^{2}}{n}\right)}$ $(\ast)$ given by Gutman et al. is improved to the following bound
    \begin{align*}
        E(G_{S})\leq \sqrt{n\left(2m+\sigma-\frac{\sigma^{2}}{n}\right)-\frac{n}{2}\left(\left |\lambda_{1}-\frac{\sigma}{n}\right |-\left |\lambda_{n}-\frac{\sigma}{n}\right |\right)^{2}},
    \end{align*}
where $\left| \lambda_{1}-\frac{\sigma}{n}\right| \geq \dots \geq \left| \lambda_{n}-\frac{\sigma}{n}\right|$. Moreover, all graphs are characterized when the equality holds in Gutmans' bound $(\ast)$ by using this new bound.
\end{abstract}




\section{Introduction}

The $energy$ $E(G)$ of a simple graph $G$ was defined by Gutman \cite{Ref9} in 1978 as $E(G) = \sum_{i=1}^{n}\left|\lambda_{i}\right|$. Since then, graph energy has become an important topological index and played a crucial role in spectral graph theory and mathematical chemistry \cite{Ref4,Ref5}. More information and details about the graph energy are available in \cite{Ref12} and \cite{Ref18}. 

Since self-loops distinguish hetero-atoms from carbon atoms in heteroconjugated molecules\cite{Ref3}, graphs with self-loops have been applied to the topological studies on heteroconjugated molecules \cite{Ref10,Ref11,Ref22}.

Let $G_{S}$ be a graph obtained from a simple graph $G$ by attaching one self-loop at each vertex in $S$, where $S$ is a subset of the vertex set $V(G)=\left\{v_1,\dots,v_n\right\}$ and $\left| S \right|=\sigma$. 
The $adjacency$ $matrix$ $A(G_{S}) = [a_{ij}]_{n\times n}$ is a square symmetric matrix of order $n$ such that
\begin{align*}
a_{ij}=\begin{cases}
1 & \rm if \enspace \textit{i $\neq$ j} \enspace \rm{and} \enspace \textit v_{\textit i}, \textit v_{\textit j} \enspace are \enspace adjacent,\\
0 & \rm if \enspace \textit{i $\neq$ j} \enspace  \rm{and} \enspace \textit v_{\textit i}, \textit v_{\textit j} \enspace are \enspace not \enspace adjacent,\\
1 & \rm if \enspace \textit{i = j} \enspace \rm{and} \enspace \textit v_{\textit i} \in \textit S, \\
0 & \rm if \enspace \textit{i = j} \enspace \rm{and} \enspace \textit v_{\textit i} \notin \textit S.
\end{cases}
\end{align*}

Recently, the concept of graph energy has been extended from simple graphs to graphs with self-loops by Gutman et al. in \cite{Ref13}. The energy of $G_{S}$ is defined as 
\begin{align*}
    E(G_{S}) = \sum_{i=1}^{n}\left|\lambda_{i}-\frac{\sigma}{n}\right|,
\end{align*}
where $\lambda_{1},\dots,\lambda_{n}$ are the adjacency eigenvalues of $G_{S}$.

After the definition of $E(G_{S})$ was proposed, considerable attention has been paid to the energy of graphs with self-loops\cite{Ref1,Ref2,Ref16,Ref17,Ref21}.
In \cite{Ref16}, Jovanovi\'c et al. gave a simple set of graphs that satisfy $E(G_{S}) \leq E(G)$ and disproved the conjecture presented in \cite{Ref13}.
In \cite{Ref17}, Liu et al. derived several lower and upper bounds of $E(G_{S})$ by using the maximum degree $\Delta $, the minimum degree $\delta$ and the graph spread $s(G)$ of $G$.

In \cite{Ref13}, Gutman et al. gave the upper bound of $E(G_{S})$
\begin{align} \label{bound0}
    E(G_{S}) \leq \sqrt{n\left(2m+\sigma-\frac{\sigma^{2}}{n}\right)}. \tag{$\ast$}
\end{align}
The formula (\ref{bound0}) is a generalization of the original McClelland's bound $E(G) \leq \sqrt{2mn}$ (see \cite{Ref19}).
In \cite{Ref1}, Akbari et al. studied the bound (\ref{bound0}) and derived a necessary condition that $G_S$ is $(a, b)\mbox{-}$semi$\mbox{-}$regular when the equality holds.

In \cite{Ref20}, Milovanovi$\acute{\rm{c}}$ et al.  presented a upper bound of simple graph
\begin{align} \label{bound1}
    E(G)\leq \sqrt{2mn-\frac{n}{2}\left(\left |\lambda_{1}\right |-\left |\lambda_{n}\right |\right)^{2}},
\end{align}
and they also proved that the equality holds in (\ref{bound1}) if and only if $G = nK_{1}$ or $G =\frac{n}{2}K_{2}$ when $n$ is even. Motivated by the above result, we generalize the bound (\Ref{bound1}) to the graphs with self$\mbox{-}$loops.

The rest of this paper consists of two sections. In Section 2, the bound (\ref{bound0}) is improved to a new upper bound. In Section 3, several lower bounds are obtained.

\section{A better upper bound for the energy of graphs with self-loops}

In this section, our primary results are to prove a new upper bound, and characterize all graphs when the equality holds in Gutmans' bound (\ref{bound0}) by using this new upper bound and some other established conclusions. To begin with, a useful lemma is given for the eigenvalues of graphs with self-loops.

\begin{lemma} \cite{Ref13}  \label{lemma1}
    Let $G_{S}$ be a graph of order n, with m edges and $\sigma$ self-loops; $\lambda_{1},\dots,\lambda_{n}$ be its adjacency eigenvalues. Then
    \begin{align*}
     \sum_{i=1}^{n}\lambda_{i} = \sigma, \quad \sum_{i=1}^{n}\lambda_{i}^{2} = 2m+\sigma,
    \end{align*}
    and
        \begin{align*}
        \sum_{i=1}^{n}\left|\lambda_{i}-\frac{\sigma}{n}\right|^{2} = 2m+\sigma-\frac{\sigma^{2}}{n}.
    \end{align*}
\end{lemma}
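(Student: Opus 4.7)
The plan is to prove the three identities in order, using standard trace computations for the first two and then deriving the third by algebraic expansion.

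First I would prove $\sum_{i=1}^n \lambda_i = \sigma$ by invoking the fact that the sum of the eigenvalues of a symmetric matrix equals its trace. Since $A(G_S)$ has a $1$ on the $i$th diagonal entry precisely when $v_i \in S$ and a $0$ otherwise, the trace is exactly $|S| = \sigma$.

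Next, for $\sum_{i=1}^n \lambda_i^2$, I would use the identity $\sum_i \lambda_i^2 = \operatorname{tr}(A(G_S)^2) = \sum_{i,j} a_{ij}^2$. The off-diagonal contribution counts each edge twice (once as $a_{ij}$ and once as $a_{ji}$), yielding $2m$, while the diagonal contribution is $\sum_{i} a_{ii}^2 = \sigma$, since $a_{ii} \in \{0,1\}$ with exactly $\sigma$ ones. Summing gives $2m + \sigma$.

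Finally, for the third identity, I would expand the square directly:
\begin{align*}
\sum_{i=1}^n \left(\lambda_i - \tfrac{\sigma}{n}\right)^2 = \sum_{i=1}^n \lambda_i^2 - \tfrac{2\sigma}{n} \sum_{i=1}^n \lambda_i + n \cdot \tfrac{\sigma^2}{n^2},
\end{align*}
then substitute the two previously established identities to obtain $(2m+\sigma) - \tfrac{2\sigma^2}{n} + \tfrac{\sigma^2}{n} = 2m + \sigma - \tfrac{\sigma^2}{n}$.

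There is no genuine obstacle here; the result is essentially a trace computation combined with one algebraic expansion. The only subtlety worth noting is that $A(G_S)$ is still a real symmetric matrix (self-loops only modify diagonal entries, preserving symmetry), so the spectral theorem applies and the trace equals the sum of eigenvalues as used above.
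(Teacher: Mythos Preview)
Your proof is correct; the three identities follow exactly as you describe from the trace of $A(G_S)$, the trace of $A(G_S)^2$, and a direct expansion. The paper itself does not prove this lemma but simply cites it from \cite{Ref13}, so there is no proof in the paper to compare against; your argument is the standard one and would be the expected proof in any case.
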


In the following theorem, a new bound is obtained for the energy of graphs with self-loops. 
\begin{theorem} \label{theorem2}
    Let $G_{S}$ be a graph with $n \geq 2$ vertices, m edges and $\sigma$ self-loops; $\lambda_{1},\dots,\lambda_{n}$ be the adjacency eigenvalues of $G_{S}$ such that  
    $\left| \lambda_{1}-\frac{\sigma}{n}\right| \geq \dots \geq \left| \lambda_{n}-\frac{\sigma}{n}\right|.$
    Then
\begin{align} \label{bound2}
E(G_{S})\leq \sqrt{n\left(2m+\sigma-\frac{\sigma^{2}}{n}\right)-\frac{n}{2}\left(\left |\lambda_{1}-\frac{\sigma}{n}\right |-\left |\lambda_{n}-\frac{\sigma}{n}\right |\right)^{2}}.
\end{align}
\end{theorem}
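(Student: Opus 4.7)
The plan is to abbreviate $\mu_i := \left|\lambda_i - \sigma/n\right|$, so that $\mu_1 \geq \mu_2 \geq \dots \geq \mu_n \geq 0$ and $E(G_S) = \sum_{i=1}^n \mu_i$. By Lemma \ref{lemma1}, the second moment is already identified, $\sum_{i=1}^n \mu_i^2 = 2m + \sigma - \sigma^2/n$, so after squaring the claim reduces to the purely elementary inequality
\[
\Bigl(\sum_{i=1}^n \mu_i\Bigr)^2 \;\leq\; n\sum_{i=1}^n \mu_i^2 \;-\; \frac{n}{2}(\mu_1 - \mu_n)^2
\]
for any non-negative decreasing sequence $\mu_1 \geq \dots \geq \mu_n$. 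This is where the actual work happens; afterwards everything is bookkeeping.

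The natural starting point is the Lagrange-type identity
\[
n\sum_{i=1}^n \mu_i^2 \;-\; \Bigl(\sum_{i=1}^n \mu_i\Bigr)^2 \;=\; \sum_{1 \leq i < j \leq n}(\mu_i - \mu_j)^2,
\]
which rephrases the task as proving the one-sided estimate $\sum_{i<j}(\mu_i - \mu_j)^2 \geq \tfrac{n}{2}(\mu_1 - \mu_n)^2$. This pair-sum bound is the heart of the argument and the only step where the ordering of the $\mu_i$ actually enters.

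To establish it I would isolate all pairs that involve either $\mu_1$ or $\mu_n$ and discard the rest, obtaining the lower bound
\[
\sum_{i<j}(\mu_i - \mu_j)^2 \;\geq\; (\mu_1 - \mu_n)^2 + \sum_{j=2}^{n-1}\bigl[(\mu_1 - \mu_j)^2 + (\mu_j - \mu_n)^2\bigr].
\]
For each middle index $j$, the elementary inequality $x^2 + y^2 \geq \tfrac{1}{2}(x+y)^2$, applied with $x = \mu_1 - \mu_j$ and $y = \mu_j - \mu_n$, gives the telescoping lower bound $\tfrac{1}{2}(\mu_1 - \mu_n)^2$ for each bracket. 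Summing yields at least $(\mu_1 - \mu_n)^2 + \tfrac{n-2}{2}(\mu_1 - \mu_n)^2 = \tfrac{n}{2}(\mu_1 - \mu_n)^2$, as needed. Plugging this back through the Lagrange identity and substituting Lemma \ref{lemma1} produces (\ref{bound2}) after taking a square root.

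No positivity issue arises under the radical: the right-hand side of the Lagrange identity is manifestly non-negative, so the expression under the square root bounds $(\sum \mu_i)^2$ and is automatically non-negative. The only step requiring any thought is the pair-sum estimate, but the ``sum of two squares dominates half of the square of the sum'' trick disposes of it in one line, so I do not anticipate a serious obstacle; the main thing to be careful about is simply that the discarding of middle-vs-middle pair terms is valid, which it trivially is since each such term is non-negative.
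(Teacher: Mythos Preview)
Your proposal is correct and follows essentially the same route as the paper: Lagrange's identity to rewrite $n\sum\mu_i^2 - (\sum\mu_i)^2$ as a pair sum, then discard all pairs not touching index $1$ or $n$, then apply $x^2+y^2\ge\tfrac12(x+y)^2$ (which the paper phrases as Jensen's inequality) to each surviving bracket. The only cosmetic difference is that the paper treats $n=2$ by direct verification, whereas your argument already covers it uniformly since the middle sum is empty and $(\mu_1-\mu_2)^2=\tfrac{2}{2}(\mu_1-\mu_n)^2$.
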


\begin{proof}[Proof]
When $n=2$, $G_{S}=K_{2}$, $\widehat{K_{2}}$, $\widetilde{K_{2}}$, respectively. $\widehat{K_{2}}$ denotes $K_{2}$ attaching a self-loop to each of its vertices and $\widetilde{K_{2}}$ denotes $K_{2}$ attaching a self-loop to one of its vertices. It is easy to verify the bound (\ref{bound2}) holds when $n=2$. In the following proof, we assume $n \geq 3$.

From the Lagrange's identity (see \cite[Theorem 2.1.(2)]{Ref7})
\begin{align*}
    \left(\sum_{i=1}^{n}a_{i}^{2}\right)\left(\sum_{i=1}^{n}b_{i}^{2}\right)-\left(\sum_{i=1}^{n}a_{i}b_{i}\right)^{2}=\sum_{1 \leq i < j \leq n}\left( a_{i}b_{j}-a_{j}b_{i} \right)^{2},
\end{align*}
for $i=1,2,\dots,n$, let $a_{i}=1$, $b_{i}=\left | \lambda_{i} - \frac{\sigma}{n} \right |$ and by Lemma \ref{lemma1}, we have
\begin{align}    \label{eq3}
    n\left( 2m+\sigma-\frac{\sigma^{2}}{n}\right)-E(G_{S})^{2} \nonumber &= n\sum_{i=1}^{n} \left| \lambda_{i}-\frac{\sigma}{n} \right|^{2} - \left(\sum_{i=1}^{n}\left| \lambda_{i}-\frac{\sigma}{n}\right| \right)^{2}  \nonumber \\
    &= \sum_{1 \leq i < j \leq n}\left( \left| \lambda_{i}-\frac{\sigma}{n} \right| - \left| \lambda_{j}-\frac{\sigma}{n} \right| \right)^{2}.
\end{align}
Only keep items with parameter $\left| \lambda_{1}-\frac{\sigma}{n}\right|$ or $\left| \lambda_{n}-\frac{\sigma}{n}\right|$, the following inequality is obtained

\begin{eqnarray}   \label{eq4}
    && \sum_{1 \leq i < j \leq n}\left( \left| \lambda_{i}-\frac{\sigma}{n} \right| - \left| \lambda_{j}-\frac{\sigma}{n} \right| \right)^{2}   \nonumber\\
    &\geq& \sum_{i=2}^{n-1}\left( \left(\left| \lambda_{1}-\frac{\sigma}{n} \right| - \left| \lambda_{i}-\frac{\sigma}{n} \right| \right)^{2}+\left(\left| \lambda_{i}-\frac{\sigma}{n} \right| - \left| \lambda_{n}-\frac{\sigma}{n} \right| \right)^{2}\right) \nonumber\\
    && +\left(\left| \lambda_{1}-\frac{\sigma}{n} \right| - \left| \lambda_{n}-\frac{\sigma}{n} \right| \right)^{2}. 
\end{eqnarray} 
By the Jensen's inequality, it follows that
\begin{eqnarray}    \label{eq5}
&&\sum_{i=2}^{n-1}\left( \left(\left| \lambda_{1}-\frac{\sigma}{n} \right| - \left| \lambda_{i}-\frac{\sigma}{n} \right| \right)^{2}+\left(\left| \lambda_{i}-\frac{\sigma}{n} \right| - \left| \lambda_{n}-\frac{\sigma}{n} \right| \right)^{2}\right)  \nonumber    \\
&\geq& 2\sum_{i=2}^{n-1}\left( \frac{\left| \lambda_{1}-\frac{\sigma}{n} \right| - \left| \lambda_{i}-\frac{\sigma}{n} \right|+\left| \lambda_{i}-\frac{\sigma}{n} \right| - \left| \lambda_{n}-\frac{\sigma}{n} \right|}{2} \right)^{2}\nonumber    \\
&=&\frac{n-2}{2}\left(\left| \lambda_{1}-\frac{\sigma}{n} \right| - \left| \lambda_{n}-\frac{\sigma}{n} \right|\right)^{2}. 
\end{eqnarray}
Combining (\ref{eq3}), (\ref{eq4}), (\ref{eq5}), the following inequality is presented
\begin{eqnarray}
&&n\left( 2m+\sigma-\frac{\sigma^{2}}{n}\right)-E(G_{S})^{2}  \nonumber  \\
&\geq& \frac{n-2}{2}\left(\left| \lambda_{1}-\frac{\sigma}{n} \right| - \left| \lambda_{n}-\frac{\sigma}{n} \right|\right)^{2}+\left(\left| \lambda_{1}-\frac{\sigma}{n} \right| - \left| \lambda_{n}-\frac{\sigma}{n} \right| \right)^{2}  \nonumber  \\
&=&\frac{n}{2}\left(\left| \lambda_{1}-\frac{\sigma}{n} \right| - \left| \lambda_{n}-\frac{\sigma}{n} \right|\right)^{2}. \nonumber
\end{eqnarray}
After the simplification, we can get bound (\ref{bound2}).
\end{proof}

The upper bound (\ref{bound2}) is better than $E(G_{S})\leq \sqrt{n\left(2m+\sigma-\frac{\sigma^{2}}{n}\right)}$ (\ref{bound0}) when $\left|\lambda_{1}-\frac{\sigma}{n}\right| \neq \left|\lambda_{n}-\frac{\sigma}{n}\right|$. In particular, the upper bound (\ref{bound2}) reduce to Gutmans' bound (\ref{bound0}) when $\left|\lambda_{1}-\frac{\sigma}{n}\right| = \left|\lambda_{n}-\frac{\sigma}{n}\right|$. Besides, the upper bound (\ref{bound2}) is used to characterize all graphs when equality holds in Gutmans' bound (\ref{bound0}).

Let $\rho(A)$ be the $spectral$ $radius$ of a matrix $A$. $\widehat{K_{n}}$ denotes the complete graph $K_{n}$ attaching a self-loop to each of its vertices, $\widetilde{K_{2}}$ denotes $K_{2}$ attaching a self-loop to one of its vertices.

\begin{lemma}  \label{lemma3}
    Let $G_{S}$ be a connected graph with $n \geq 1$ vertices, m edges and $\sigma$ self-loops; $\lambda_{1},\dots,\lambda_{n}$ be the adjacency eigenvalues of $G_{S}$ such that $\left| \lambda_{1}-\frac{\sigma}{n}\right| \geq \dots \geq \left| \lambda_{n}-\frac{\sigma}{n}\right|$. Then $\left|\lambda_{1}-\frac{\sigma}{n}\right| = \left|\lambda_{n}-\frac{\sigma}{n}\right|$ if and only if
\begin{equation*}
  G_{S}=\left\{
  \begin{array}{cc}
   K_{1} \enspace or \enspace K_{2} \enspace & {\rm{if}} \enspace \sigma = 0,\\
   \\
   \widehat{K_{1}} \enspace or \enspace \widetilde{K_{2}} & {\rm{if}} \enspace \sigma = 1,\\
   \\
   \widehat{K_{2}} & {\rm{if}} \enspace \sigma = 2.\\
   \end{array}\right.
\end{equation*}
\end{lemma}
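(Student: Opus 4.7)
The plan is to argue that the hypothesis $\left|\lambda_{1}-\frac{\sigma}{n}\right|=\left|\lambda_{n}-\frac{\sigma}{n}\right|$ forces the spectrum of $A(G_{S})$ to consist of at most two values, and then exploit Perron--Frobenius together with the trace identity from Lemma~\ref{lemma1} to pin down $n$ and $\sigma$.

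Since the quantities $\left|\lambda_{i}-\frac{\sigma}{n}\right|$ are monotone non-increasing in $i$ and the two extremes coincide, every $\left|\lambda_{i}-\frac{\sigma}{n}\right|$ equals a common value $c\ge 0$. Hence each $\lambda_{i}$ equals either $\frac{\sigma}{n}+c$ or $\frac{\sigma}{n}-c$. I would then split on whether $c=0$. If $c=0$, then $A(G_{S})=\frac{\sigma}{n}I$; because the entries of $A(G_{S})$ lie in $\{0,1\}$, this forces $\sigma=0$ or $\sigma=n$. In either subcase $G_{S}$ has no edges, so connectedness yields $n=1$, giving $G_{S}\in\{K_{1},\widehat{K_{1}}\}$.

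When $c>0$, the central observation is that the connectedness of $G_{S}$ means the underlying simple graph $G$ is connected, and hence $A(G_{S})$ is an irreducible nonnegative symmetric matrix (self-loops only add positive diagonal entries and do not alter the reachability structure). By the Perron--Frobenius theorem its spectral radius is a simple eigenvalue; this spectral radius must equal the larger of the two candidate values, namely $\frac{\sigma}{n}+c$. Consequently $\frac{\sigma}{n}+c$ has multiplicity $1$ and $\frac{\sigma}{n}-c$ has multiplicity $n-1$. Substituting into the trace identity $\sum_{i}\lambda_{i}=\sigma$ from Lemma~\ref{lemma1} yields $\left(\frac{\sigma}{n}+c\right)+(n-1)\left(\frac{\sigma}{n}-c\right)=\sigma$, which collapses to $(n-2)c=0$. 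Since $c>0$ we must have $n=2$; connectedness then forces $G=K_{2}$, and the three admissible choices $\sigma\in\{0,1,2\}$ produce exactly $K_{2}$, $\widetilde{K_{2}}$, and $\widehat{K_{2}}$.

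For the converse direction I would simply compute the spectra of the five listed graphs and verify $\left|\lambda_{1}-\frac{\sigma}{n}\right|=\left|\lambda_{n}-\frac{\sigma}{n}\right|$ in each case, which is a short direct calculation. I expect the only mildly delicate step to be the Perron--Frobenius argument, specifically justifying that $A(G_{S})$ is irreducible whenever $G_{S}$ is connected; this is the hinge of the proof, but it follows at once from the fact that irreducibility depends only on which pairs of vertices are linked through a nonzero off-diagonal entry, a structure unaffected by the presence of self-loops.
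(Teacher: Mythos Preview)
Your proposal is correct and follows essentially the same route as the paper: both arguments observe that the hypothesis forces all $\left|\lambda_i-\frac{\sigma}{n}\right|$ to coincide, invoke Perron--Frobenius on the irreducible nonnegative matrix $A(G_S)$ to make the larger eigenvalue simple, and then use the trace identity $\sum_i\lambda_i=\sigma$ from Lemma~\ref{lemma1} to force $n\le 2$. The only cosmetic difference is that you split first on $c=0$ versus $c>0$ and dispose of the degenerate case via $A(G_S)=\tfrac{\sigma}{n}I$ and the $\{0,1\}$-entry constraint, whereas the paper organises the case split as $n=2$ versus $n\neq 2$ and appeals to Perron--Frobenius a second time; both are equally valid.
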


\begin{proof}
The sufficiency of the equality holds is just a straight-forward verification. 

For the necessity part, if $\left|\lambda_{1}-\frac{\sigma}{n}\right| = \left|\lambda_{n}-\frac{\sigma}{n}\right|$, it is easy to see all $\left| \lambda_{i}-\frac{\sigma}{n}\right|$ are equal for $i=1,\dots,n$. Let $\lambda_{1},\dots,\lambda_{n}$ are either $x$ or $\frac{2\sigma}{n}-x$, where $x$ is a constant. Since $G_{S}$ is connected, $A(G_{S})$ is irreducible and non-negative. By the famous Perron-Frobenius Theorem\cite[Theorem 8.4.4.]{Ref14}, the algebraic multiplicity of spectral radius $\rho$ is 1. Assume $x=\rho \geq \frac{2\sigma}{n}-x$, the spectrum of $G_{S}$ is 
      \begin{align*}
        {\rm{Spec}}(G_{S})=\left\{x=\rho,\underbrace{\frac{2\sigma}{n}-x,\dots,\frac{2\sigma}{n}-x}_{n-1}\right\}.
    \end{align*}
By Lemma \ref{lemma1} and direct calculation,
\begin{eqnarray}
\sum_{i=1}^{n}\lambda_{i}(G_{S})=\sigma &\Rightarrow& x+(n-1)\left(\frac{2\sigma}{n}-x\right)=\sigma, \nonumber \\
   &\Rightarrow& (2-n)x=\frac{(2-n)\sigma}{n}. \nonumber
\end{eqnarray}
Now we consider the following 2 cases:

{\bfseries Case 1.} $n=2$. The connected graphs of order $2$ are $K_{2}$, $\widetilde {K_{2}}$ and $\widehat{K_{2}}$, respectively. All of them satisfy the request of the spectrum.

{\bfseries Case 2.} $n \neq 2$. Then $x=\frac{\sigma}{n}$ and all eigenvalues $x$ and $\frac{2\sigma}{n}-x$ are equal to $\frac{\sigma}{n}$.
Again, by the Perron-Frobenius Theorem, the algebraic multiplicity of $\frac{\sigma}{n}$ is 1. Thus $G_{S}=K_{1}$ when $\sigma=0$ or $\widehat{K_{1}}$ when $\sigma=1$.
\end{proof}

By using the Lemma \ref{lemma3}, the main result in this section is presented as follows.

\begin{theorem}
Let $G_{S}$ be a graph with $n\geq1$ vertices, m edges and $\sigma$ self-loops. Then the equality holds in Gutmans' bound (\ref{bound0}) such that
\begin{align} \label{eq6}
E(G_{S}) = \sqrt{n\left(2m+\sigma-\frac{\sigma^{2}}{n}\right)}
\end{align}
if and only if 
\begin{equation*}
G_{S}=\left\{
\begin{array}{cc}
nK_{1} \enspace or \enspace \frac{n}{2}K_{2} & {\rm{if}} \enspace \sigma = 0,\\
\\
\frac{n}{2}K_{1} \cup \frac{n}{2}\widehat{K_{1}} \enspace or \enspace  \frac{n}{2}\widetilde{K_{2}}  & {\rm{if}} \enspace \sigma = \frac{n}{2},\\
\\
n\widehat{K_{1}} \enspace or \enspace \frac{n}{2}\widehat{K_{2}} & {\rm{if}} \enspace \sigma = n.\\
\end{array}\right.
\end{equation*}
\end{theorem}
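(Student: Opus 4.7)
The plan is to first use Theorem \ref{theorem2} to turn the equality condition in Gutman's bound (\ref{bound0}) into a spectral structure condition, and then to classify all $G_S$ satisfying that structure by analyzing the connected components. Since the right-hand side of (\ref{bound2}) is never larger than that of (\ref{bound0}), equality in (\ref{bound0}) forces $|\lambda_1-\tfrac{\sigma}{n}|=|\lambda_n-\tfrac{\sigma}{n}|$; combined with the ordering this means all $|\lambda_i-\tfrac{\sigma}{n}|$ equal a common value $c\ge 0$, so every eigenvalue of $A(G_S)$ lies in the two-element set $\{\alpha,\beta\}:=\{\tfrac{\sigma}{n}+c,\tfrac{\sigma}{n}-c\}$. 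In the degenerate case $c=0$, $A(G_S)=\tfrac{\sigma}{n}I$ has entries in $\{0,1\}$, so $\sigma\in\{0,n\}$, giving $G_S=nK_1$ or $n\widehat{K_1}$.

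For $c>0$, I would decompose $G_S$ into its connected components $H_1,\dots,H_k$ (with respect to the underlying simple graph) and show that each $H_j$ must be one of $K_1,\widehat{K_1},K_2,\widetilde{K_2},\widehat{K_2}$. The minimal-polynomial identity $A(H_j)^2=(\alpha+\beta)A(H_j)-\alpha\beta I$, read entrywise, yields (i) non-adjacent vertices in $H_j$ share no common neighbor, so the underlying simple graph of $H_j$ is a clique $K_{n_j}$; and (ii) for any two adjacent $u,v$ in that clique, $[u\in S]+[v\in S]=2\tfrac{\sigma}{n}-n_j+2$, which must be the same for every pair. For $n_j\ge 3$ this constancy forces a uniform self-loop pattern on $H_j$, and the only solutions with $0\le\tfrac{\sigma}{n}\le 1$ are $H_j=K_3$ with $\sigma_j=0$, giving $(\tfrac{\sigma}{n},c)=(\tfrac12,\tfrac32)$, or $H_j=K_4$ with $\sigma_j=0$, giving $(\tfrac{\sigma}{n},c)=(1,2)$. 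Both are excluded because no other admissible component has spectrum in $\{2,-1\}$ respectively $\{3,-1\}$, so $G_S$ would be a disjoint union of $K_3$'s (resp.\ $K_4$'s), forcing $\sigma=0$ and contradicting the required $\sigma=n/2$ (resp.\ $\sigma=n$). Hence $n_j\le 2$ for every component, and the five possible $H_j$ are determined by inspection.

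Finally, I would enumerate the compatible combinations by requiring that all components share the same pair $(\alpha,\beta)$ and that $\sum_j\sigma_j=\sigma$. The admissible pairs $(\tfrac{\sigma}{n},c)$ turn out to be $(0,0),(0,1),(\tfrac12,\tfrac12),(\tfrac12,\tfrac{\sqrt5}{2}),(1,0),(1,1)$, giving respectively $nK_1$, $\tfrac{n}{2}K_2$, $\tfrac{n}{2}K_1\cup\tfrac{n}{2}\widehat{K_1}$, $\tfrac{n}{2}\widetilde{K_2}$, $n\widehat{K_1}$, and $\tfrac{n}{2}\widehat{K_2}$, which group into the three $\sigma$-cases of the theorem. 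The main obstacle is step (ii) for $n_j\ge 3$: Lemma \ref{lemma3} uses the local centering $\sigma_j/n_j$ and cannot be invoked componentwise (our centering is global at $\sigma/n$), so the clique structure of each component and the elimination of $K_3$, $K_4$ have to be derived by hand from the minimal-polynomial identity.
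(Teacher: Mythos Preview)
Your plan is correct and complete (modulo the easy sufficiency check, which is a direct computation), but it follows a genuinely different route from the paper's proof. Both arguments share the first step: equality in (\ref{bound0}) together with Theorem~\ref{theorem2} forces all $|\lambda_i-\tfrac{\sigma}{n}|$ to coincide, so $\mathrm{Spec}(G_S)\subseteq\{\alpha,\beta\}$ with $\alpha+\beta=\tfrac{2\sigma}{n}$. From there the two proofs diverge. The paper separates the connected case (handled by Lemma~\ref{lemma3}) from the disconnected case, and in the latter uses Perron--Frobenius to count components via the multiplicity of the spectral radius, together with an algebraic-integer argument (rational eigenvalues of an integer matrix are integers) to pin down $\tfrac{2\sigma}{n}-1\in\mathbb{Z}$ and hence $\sigma\in\{0,\tfrac{n}{2},n\}$. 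Your approach instead reads the quadratic relation $A(H_j)^2=(\alpha+\beta)A(H_j)-\alpha\beta I$ entrywise on each component: the off-diagonal non-edge entries show diameter $\le 1$, hence every component is a clique, and the edge entries give the loop constraint $[u\in S]+[v\in S]=\tfrac{2\sigma}{n}-n_j+2$, which for $n_j\ge 3$ forces a uniform loop pattern and then rules out $K_3$, $K_4$ by a global $\sigma$-count. Your method is more hands-on and entirely self-contained---it avoids both Lemma~\ref{lemma3} and the algebraic-integer step---at the cost of a small extra case analysis to exclude the $K_3$/$K_4$ components. The paper's route is a bit shorter once those auxiliary facts are in hand, but yours makes the combinatorial structure (why the components must be tiny) more transparent.
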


\begin{proof}
The sufficiency of the case is a simple check. 

For the necessity part, by Theorem \ref{theorem2}, $\left| \lambda_{1}-\frac{\sigma}{n}\right| \geq \dots \geq \left| \lambda_{n}-\frac{\sigma}{n}\right|$ and the upper bound (\ref{bound2}) reduce to Gutmans' bound (\ref{bound0}) when $\left|\lambda_{1}-\frac{\sigma}{n}\right| = \left|\lambda_{n}-\frac{\sigma}{n}\right|$. Thus the equality holds in (\ref{eq6}) only if 
\begin{align*}
    \left| \lambda_{1}-\frac{\sigma}{n}\right|=\dots=\left| \lambda_{n}-\frac{\sigma}{n}\right|.
\end{align*} 
Let $\lambda_{1},\dots,\lambda_{n}$ are either $x$ or $\frac{2\sigma}{n}-x$, where $x$ is a constant. Without loss of generality, assume $x=\rho \geq \frac{2\sigma}{n}-x$ is the spectral radius of $G_{S}$, now we consider the following cases:

{\bfseries Case 1.} $G_{S}$ is connected. By Lemma \ref{lemma3}, $G_{S}$ is either $K_{1}$, $\widehat{K_{1}}$, $K_{2}$, $\widetilde {K_{2}}$ or $\widehat{K_{2}}$ according to the number of self-loops, respectively. All of them satisfy the request of the spectrum.

{\bfseries Case 2.} $G_{S}$ is disconnected. Assume the spectrum of $G_{S}$ is 
\begin{align*}
{\rm{Spec}}(G_{S})=\left\{\underbrace{x,\dots,x}_{p},\underbrace{\frac{2\sigma}{n}-x,\dots,\frac{2\sigma}{n}-x}_{n-p}\right\}.
\end{align*}
By Lemma \ref{lemma1} and direct calculation,
\begin{eqnarray}
\sum_{i=1}^{n}\lambda_{i}(G_{S})=\sigma &\Rightarrow& px+(n-p)\left(\frac{2\sigma}{n}-x\right)=\sigma, \nonumber \\
   &\Rightarrow& (2p-n)x=\frac{(2p-n)\sigma}{n}. \nonumber
\end{eqnarray}

{\bfseries Subcase 2.1.} $n \neq 2p$. Then $x=\frac{\sigma}{n}$ and all eigenvalues $x$ and $\frac{2\sigma}{n}-x$ are equal to $\frac{\sigma}{n}$. By the Perron-Frobenius Theorem, the algebraic multiplicity of all connected components' spectral radius is 1. Therefore $G_{S}$ is comprised of all isolated vertices with $\sigma$ self-loops and $A(G_{S})={\rm{diag}}\{\frac{\sigma}{n},\dots,\frac{\sigma}{n}\}$.
Since the elements of the adjacency matrix $A(G_{S})$ are 0 or 1, $G_{S} = nK_{1}$ when $\sigma=0$; $G_{S} = n\widehat{K_{1}}$ when $\sigma=n$.

{\bfseries Subcase 2.2.} $n = 2p$. Then 
\begin{align*}
{\rm{Spec}}(G_{S})=\left\{\underbrace{x,\dots,x}_{\frac{n}{2}},\underbrace{\frac{2\sigma}{n}-x,\dots,\frac{2\sigma}{n}-x}_{\frac{n}{2}}\right\}.
\end{align*}
Therefore $G_{S}$ contains at least $\frac{n}{2}$ connected components.

{\bfseries Subcase 2.2.1.} $G_{S}$ does not contain $K_{1}$ or $\widehat{K_{1}}$ as a connected component. It follows that the order of all connected components is $2$. The connected component of order 2 are $K_{2}$, $\widetilde{K_{2}}$ or $\widehat{K_{2}}$, respectively. In order to satisfy the spectrum of $G_{S}$, all connected components are identical. Therefore $G_{S}$ is either $\frac{n}{2}K_{2}$, $\frac{n}{2}\widetilde{K_{2}}$ or $\frac{n}{2}\widehat{K_{2}}$ ($n$ is even).

{\bfseries Subcase 2.2.2.} $G_{S}$ contains $K_{1}$ or $\widehat{K_{1}}$ as a connected component.
In this subcase, $x=0$ or $1$. Since $x \geq \frac{2\sigma}{n}-x$, it follows that $G_{S} = nK_{1}$ when $x=0$. When $x=1$, 
\begin{align*}
{\rm{Spec}}(G_{S})=\left\{\underbrace{1,\dots,1}_{\frac{n}{2}},\underbrace{\frac{2\sigma}{n}-1,\dots,\frac{2\sigma}{n}-1}_{\frac{n}{2}}\right\}.
\end{align*}

The characteristic polynomial $\phi(\lambda)$ is a monic polynomial with integer coefficients, so according to the definition of algebraic integer\cite[Section 2.3]{Ref25}, all zeros of $\phi(\lambda)$ are algebraic integer. And it is  known that any rational and algebraic integer number is an integer\cite[Lemma 2.14.]{Ref25}. Thus the rational eigenvalues of $G_{S}$ must be integers. Since $\sigma$ and $n$ are integers, then $\frac{2\sigma}{n}-1$ is rational, so we can conclude that $\sigma=n$, $0$ or $\frac{n}{2}$. 

By using Lemma \ref{lemma1}, $G_{S}=n\widehat{K_{1}}$ when $\sigma=n$; $G_{S}=\frac{n}{2}K_{2}$ when $\sigma=0$, which contradicts the fact that $G_{S}$ contains $K_{1}$ or $\widehat{K_{1}}$ as a connected component;
$G_{S}=\frac{n}{2}K_{1} \cup \frac{n}{2}\widehat{K_{1}}$ when $\sigma=\frac{n}{2}$.
\end{proof}

\section{Some lower bounds for the energy of graphs with self-loops}

In this section, several lower bounds are derived. In addition, the necessary and sufficient conditions are given for the equality holds in some of them.
\begin{lemma} \cite{Ref23} \label{lemma5}
Let $G_{S}$ be a graph obtained from a simple graph $G$ of order $n$ and size $m$ by attaching a self-loop at each vertex of $S \subseteq V(G)$ with $\left| S \right| =\sigma $. If $\left \{\lambda_{i}\right\}_{1\leq i \leq n}$ are the eigenvalues of the adjacency matrix of the graph $G_{S}$, then
\begin{align*}
    \sum_{i<j}\left| \lambda_{i}-\frac{\sigma}{n} \right|\left| \lambda_{j}-\frac{\sigma}{n} \right| \geq m+\frac{\sigma(n-\sigma)}{2n},
\end{align*}
with equality if and only if $G_{S}$ is a totally disconnected graph with $\sigma = 0$ or $\sigma= n$.
\end{lemma}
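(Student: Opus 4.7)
The plan is to work with the shifted eigenvalues $x_i = \lambda_i - \frac{\sigma}{n}$ throughout. By Lemma \ref{lemma1} we immediately have $\sum_{i=1}^{n} x_i = 0$ and $\sum_{i=1}^{n} x_i^{2} = 2m + \sigma - \frac{\sigma^{2}}{n}$. Squaring the first identity and isolating the cross terms yields
\begin{align*}
\sum_{i<j} x_i x_j \;=\; -\tfrac{1}{2}\sum_{i=1}^{n} x_i^{2} \;=\; -m - \tfrac{\sigma(n-\sigma)}{2n}.
\end{align*}
This is the arithmetic backbone of the whole argument, and is exactly the right-hand side (up to a sign) that we want to produce.

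Next I would apply the elementary inequality $|x_i|\,|x_j| = |x_i x_j| \geq -x_i x_j$, valid for all real $x_i,x_j$, with equality if and only if $x_i x_j \leq 0$. Summing over all pairs $i<j$ and substituting the expression above gives
\begin{align*}
\sum_{i<j} \left|\lambda_i - \tfrac{\sigma}{n}\right|\left|\lambda_j - \tfrac{\sigma}{n}\right| \;\geq\; -\sum_{i<j} x_i x_j \;=\; m + \tfrac{\sigma(n-\sigma)}{2n},
\end{align*}
which is the desired inequality. This step is short and clean; the substantive work is entirely in the equality discussion.

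For the equality analysis, one must have $x_i x_j \leq 0$ for every pair $i<j$. This forces at most one of the $x_i$ to be strictly positive and at most one to be strictly negative, so at most two of the shifted eigenvalues are nonzero. Combined with $\sum x_i = 0$, either all $x_i$ vanish (so every $\lambda_i$ equals $\tfrac{\sigma}{n}$) or exactly one positive and one negative value appear, equal in magnitude. I would next argue that the rational number $\tfrac{\sigma}{n}$, being a multiple eigenvalue of an integer matrix $A(G_S)$, must actually be an integer (via the algebraic-integer argument used later in the paper's main theorem); since $0\leq\sigma\leq n$, this forces $\sigma\in\{0,n\}$. In these two cases $A(G_S) = \mathbf{0}$ or $A(G_S) = I$, so $G_S$ is totally disconnected with no loops or with a loop at every vertex.

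The main obstacle is the equality characterization: one must rule out the ``one positive, one negative'' scenario. The cleanest route is to observe that the condition $x_i x_j \leq 0$ for all $i\neq j$ combined with the Perron--Frobenius theorem applied componentwise (each connected component's spectral radius is simple and equals its largest $|x_i|$ shift) forces every component to have either all $x$-values zero or at most two eigenvalues — and a case analysis on the admissible orders of components (mirroring the argument in Lemma \ref{lemma3}) collapses to the stated totally disconnected conclusion with $\sigma\in\{0,n\}$. The inequality direction is routine; the equality discussion is where care is needed.
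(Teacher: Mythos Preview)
The paper does not prove Lemma~\ref{lemma5}; it is quoted from \cite{Ref23} without argument, so there is no in-paper proof to compare against. Your derivation of the inequality itself is correct and efficient: setting $x_i=\lambda_i-\sigma/n$, the identities $\sum_i x_i=0$ and $\sum_i x_i^{2}=2m+\sigma-\sigma^{2}/n$ from Lemma~\ref{lemma1} give $\sum_{i<j}x_ix_j=-m-\tfrac{\sigma(n-\sigma)}{2n}$, and then $|x_ix_j|\ge -x_ix_j$ summed over pairs yields the bound.

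The gap is in the equality discussion, and it is not a gap you can close: the ``only if'' direction of the equality clause, as stated, is false. You correctly reduce equality to the condition $x_ix_j\le 0$ for every pair, hence at most one $x_i>0$ and at most one $x_j<0$, so the spectrum of $A(G_S)$ has the shape $\{\tfrac{\sigma}{n}+a,\tfrac{\sigma}{n}-a,\tfrac{\sigma}{n},\dots,\tfrac{\sigma}{n}\}$. But your plan to ``rule out the one positive, one negative scenario'' via a componentwise Perron--Frobenius argument cannot succeed, because that scenario genuinely occurs for graphs that are not totally disconnected. Concretely, take $G=P_3$ with $\sigma=0$: the eigenvalues are $\sqrt{2},0,-\sqrt{2}$, so $\sum_{i<j}|\lambda_i||\lambda_j|=2=m$ and equality holds. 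More generally every complete bipartite graph $K_{p,q}$ (spectrum $\pm\sqrt{pq},0,\dots,0$) with $\sigma=0$, and every $\widehat{K_{p,q}}$ with $\sigma=n$, attains equality while being connected. For $n=2$ equality is automatic for \emph{every} $G_S$, since $x_1+x_2=0$ forces $x_1x_2\le 0$; in particular $\widetilde{K_2}$ (with $\sigma=1$) gives equality. So the step where you assert that the case analysis ``collapses to the stated totally disconnected conclusion'' is exactly where the argument fails, and it fails because the target statement is wrong. The inequality and the ``if'' direction stand; the ``only if'' needs either a corrected characterization (graphs whose shifted spectrum has at most one positive and one negative entry) or a stronger hypothesis from the original source.
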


The following theorem provides lower and upper bounds of $\lambda_{1}(G_{S})$ for any graph with self-loops.

\begin{theorem} \cite{Ref24} 
    Let $G_{S}$ be a graph obtained from a simple graph $G$ of order $n$ and size $m$ by attaching a self-loop at each vertex of $S \subseteq V(G)$ with $\left| S \right| =\sigma $. If $\left \{\lambda_{i}\right\}_{1\leq i \leq n}$ are the eigenvalues of the adjacency matrix of the graph $G_{S}$, then
    \begin{align} \label{bound7}
     \frac{2m+\sigma}{n} \leq  \lambda_{1}(G_{S}) \leq \sqrt{2m+\sigma},
    \end{align}
    and the equality in both inequalities is attained when $G =K_{n}$ and $\sigma=n$.
\end{theorem}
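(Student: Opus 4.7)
The plan is to establish the two inequalities separately using Lemma \ref{lemma1}, and then verify the equality case by direct computation. The whole argument is short, because the trace and Frobenius-norm identities for $A(G_S)$ give exactly the ingredients we need.

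For the lower bound, I would use the Rayleigh quotient characterization $\lambda_{1}(G_{S}) = \max_{x \neq 0} \frac{x^{T} A(G_{S}) x}{x^{T} x}$, which applies because $A(G_{S})$ is real symmetric. Plugging in the all-ones vector $\mathbf{1} \in \mathbb{R}^{n}$, the denominator is $n$ and the numerator is $\mathbf{1}^{T} A(G_{S}) \mathbf{1} = \sum_{i,j} a_{ij}$, which counts each of the $m$ edges twice (once for $a_{ij}$ and once for $a_{ji}$) and each of the $\sigma$ self-loops once (from the diagonal), giving $2m + \sigma$. Hence $\lambda_{1}(G_{S}) \geq \frac{2m+\sigma}{n}$.

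For the upper bound, observe that since $A(G_{S})$ is a symmetric non-negative matrix, by the Perron--Frobenius theorem its spectral radius is $\lambda_{1}(G_{S}) \geq 0$. Now use the second identity of Lemma \ref{lemma1}, namely $\sum_{i=1}^{n} \lambda_{i}^{2} = 2m + \sigma$, to conclude that $\lambda_{1}(G_{S})^{2} \leq \sum_{i=1}^{n} \lambda_{i}^{2} = 2m+\sigma$, which gives $\lambda_{1}(G_{S}) \leq \sqrt{2m+\sigma}$.

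Finally I would verify the claimed equality case. When $G = K_{n}$ and $\sigma = n$, the adjacency matrix $A(G_{S})$ is the all-ones matrix $J_{n}$, whose spectrum is $\{n, 0, \ldots, 0\}$, so $\lambda_{1}(G_{S}) = n$. Since $m = \binom{n}{2}$ and $\sigma = n$, both $\frac{2m+\sigma}{n} = \frac{n(n-1)+n}{n} = n$ and $\sqrt{2m+\sigma} = \sqrt{n^{2}} = n$, confirming equality in both inequalities.

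I do not anticipate a real obstacle here; the only point requiring a bit of care is noting that $\lambda_{1}(G_{S}) \geq 0$ before squaring in the upper-bound step, which is immediate because $A(G_{S})$ is entrywise non-negative. Everything else reduces to substituting the identities already established in Lemma \ref{lemma1}.
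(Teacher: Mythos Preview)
Your argument is correct. Note, however, that the paper does not actually prove this theorem: it is quoted from \cite{Ref24} and stated without proof, so there is no paper proof to compare against directly. That said, your upper-bound step (bounding $\lambda_{1}^{2}$ by $\sum_{i}\lambda_{i}^{2}=2m+\sigma$ via Lemma~\ref{lemma1}) is exactly the argument the paper uses in the very next result, Theorem~\ref{coro1}, where it sharpens the equality condition. Your lower-bound step via the Rayleigh quotient with the all-ones vector and your verification of the $\widehat{K_{n}}$ equality case are both standard and sound.
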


Furthermore, the right-hand side of the inequality (\Ref{bound7}) can be reinforced as a sufficient and necessary condition when equality holds.

\begin{theorem} \label{coro1}
Let $G_{S}$ be a graph with n vertices, m edges and $\sigma$ self-loops; $\lambda_{1} \geq \dots \geq \lambda_{n}$ be the adjacency eigenvalues of $G_{S}$. Then
    \begin{align*}
     \lambda_{1}(G_{S}) \leq \sqrt{2m+\sigma},
    \end{align*}
the equality holds if and only if $G_{S} =\widehat{K_{\sigma}} \cup (n-\sigma)K_{1}$. Moreover, if $G_{S}$ is connected, then $G_{S} =\widehat{K_{n}}$.
\end{theorem}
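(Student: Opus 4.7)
The inequality itself is essentially immediate from Lemma~\ref{lemma1}. Since $\sum_{i=1}^{n}\lambda_i^2 = 2m+\sigma$ and every $\lambda_i^2 \geq 0$, one has $\lambda_1^2 \leq 2m+\sigma$, whence $\lambda_1 \leq \sqrt{2m+\sigma}$. So the real content of the theorem is the equality case and the connected specialisation.

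Suppose equality holds. Then $\sum_{i=2}^{n}\lambda_i^2 = 0$, which forces $\lambda_2 = \cdots = \lambda_n = 0$. Combining this with $\sum_{i=1}^{n}\lambda_i = \sigma$ from Lemma~\ref{lemma1} gives $\lambda_1 = \sigma$, and substituting back yields $\sigma^2 = 2m+\sigma$, i.e. $m = \binom{\sigma}{2}$. In particular, if $\sigma = 0$ then $A(G_S) = 0$ and $G_S = nK_1 = \widehat{K_0}\cup nK_1$, so from here on assume $\sigma \geq 1$.

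The next step, which I expect to be the heart of the argument, is to read off the structure of $A(G_S)$ from the fact that it is a real symmetric $0/1$ matrix of rank one with unique positive eigenvalue $\sigma$. By spectral decomposition, $A(G_S) = u u^{T}$ for some vector $u$ with $\|u\|^2 = \sigma$. Since each $A_{ii} = u_i^2 \in\{0,1\}$ we get $u_i \in \{-1,0,1\}$, and since each off-diagonal $A_{ij} = u_i u_j \geq 0$ the nonzero coordinates must all share a sign, so WLOG $u_i \in \{0,1\}$. Then $u_i = 1$ exactly when $v_i$ carries a self-loop, giving $|\{i : u_i = 1\}| = \sigma$, and for $i \neq j$ we have $A_{ij} = 1$ iff both endpoints carry self-loops. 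The subgraph on the $\sigma$ looped vertices is therefore a complete graph with a self-loop at every vertex, namely $\widehat{K_\sigma}$, while the remaining $n-\sigma$ vertices are isolated with no loop. Thus $G_S = \widehat{K_\sigma}\cup (n-\sigma)K_1$, as claimed. The reverse direction (checking that this graph attains equality) is a direct spectral computation: $A(\widehat{K_\sigma})$ has eigenvalues $\sigma,0,\dots,0$, so $\lambda_1(G_S) = \sigma = \sqrt{\sigma^2} = \sqrt{2\binom{\sigma}{2}+\sigma}$.

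Finally, if $G_S$ is connected, then $(n-\sigma)K_1$ contributes no component, forcing $\sigma = n$ and $G_S = \widehat{K_n}$. The only delicate point throughout is the passage from ``rank one'' to the explicit $u u^T$ form with $\{0,1\}$ entries, but once one writes the matrix as an outer product the classification is forced by reading the diagonal and off-diagonal constraints in turn.
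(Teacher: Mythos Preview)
Your proof is correct and follows essentially the same approach as the paper: deduce the inequality from $\sum_i \lambda_i^2 = 2m+\sigma$, then in the equality case use that $A(G_S)$ is a real symmetric $0/1$ matrix of rank one to pin down its structure. Your passage through the explicit outer-product form $A = uu^{T}$ is a bit more detailed than the paper's argument (which just observes that the rows with diagonal entry $1$ must coincide), but the underlying idea is identical.
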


\begin{proof}
    By Lemma \ref{lemma1}, the following inequality is presented
    \begin{align*}
        \lambda_{1}^{2} \leq \sum_{i=1}^{n}\lambda_{i}^{2}=2m+\sigma,
    \end{align*}
    the equality holds if and only if $\lambda_{1}=\sqrt{2m+\sigma}$ and $\lambda_{i}=0$ for $i=2,\dots,n$. Therefore the spectrum of $G_{S}$ is 
    \begin{align*}
        {\rm{Spec}}(G_{S})=\left\{\sqrt{2m+\sigma},\underbrace{0,\dots,0}_{n-1}\right\}.
    \end{align*}
    
When $G_{S} =\widehat{K_{\sigma}} \cup (n-\sigma)K_{1}$, the sufficiency of the case holds obviously.

For the necessity, since the adjacency matrix $A(G_{S})$ is a real symmetric matrix, its algebraic multiplicity is equal to its geometric multiplicity. Combining above property with the spectrum of $G_{S}$, the rank of the adjacency matrix $A(G_{S})$ is 1. Since $G_{S}$ contains $\sigma$ self-loops and $rank\left(A(G_{S})\right)=1$, the rows(columns) in which the diagonal element is 1 are identical. There is a permutation matrix $P$ such that the adjacency matrix $A(G_{S})$ after permuting has the following form
\begin{align*}
    P^{T}A(G_{S})P=\left[\begin{array}{c|c}
 J_{\sigma \times \sigma } & 0_{\sigma \times (n-\sigma) }\\ \hline 
 0_{(n-\sigma) \times \sigma } & 0_{(n-\sigma) \times (n-\sigma) }
\end{array}\right],
\end{align*}
where $J_{\sigma \times \sigma}$ denote the $\sigma \times \sigma$ matrix whose all entries are 1. 

Therefore, the graph corresponding to the adjacency matrix $A(G_{S})$ is $G_{S} =\widehat{K_{\sigma}} \cup (n-\sigma)K_{1}$. Moreover, if $G_{S}$ is connected, then $G_{S}$ does not contain any isolated vertices. In this case, $\sigma = n$ and $G_{S} =\widehat{K_{n}}$.
\end{proof}

Combining Lemma \ref{lemma5} and Theorem \ref{coro1}, a lower bound is obtained.

\begin{theorem}
    Let $G_{S}$ be a graph with n vertices, m edges and $\sigma$ self-loops; $\lambda_{1} \geq \dots \geq \lambda_{n}$ be the adjacency eigenvalues of $G_{S}$. Then
\begin{align} \label{bound8}
E(G_{S})\geq \sqrt{2\lambda_{1}^{2}-\frac{2\sigma^{2}}{n}},
\end{align}
the equality holds if and only if $G_{S}$ = $nK_{1}$ or $\widehat{K_{1}}$.
\end{theorem}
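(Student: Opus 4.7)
The plan is to expand $E(G_S)^2$ as a square and then apply the two auxiliary results in sequence. Specifically, I will start from the identity
\begin{align*}
E(G_S)^2 = \sum_{i=1}^{n}\left|\lambda_i-\tfrac{\sigma}{n}\right|^2 + 2\sum_{i<j}\left|\lambda_i-\tfrac{\sigma}{n}\right|\left|\lambda_j-\tfrac{\sigma}{n}\right|.
\end{align*}
Lemma \ref{lemma1} rewrites the first sum as $2m+\sigma-\sigma^2/n$, and Lemma \ref{lemma5} lower-bounds the second sum by $m+\sigma(n-\sigma)/(2n)$. Adding these (and noting that $\sigma(n-\sigma)/n = \sigma-\sigma^2/n$) should yield the clean intermediate inequality $E(G_S)^2 \geq 4m+2\sigma-2\sigma^2/n$.

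Next I will plug in Theorem \ref{coro1}, namely $\lambda_1^2 \leq 2m+\sigma$, which rearranges to $4m+2\sigma \geq 2\lambda_1^2$. Combining this with the intermediate inequality gives
\begin{align*}
E(G_S)^2 \;\geq\; 4m+2\sigma-\tfrac{2\sigma^2}{n} \;\geq\; 2\lambda_1^2 - \tfrac{2\sigma^2}{n},
\end{align*}
which is the bound (\ref{bound8}) after taking a square root (the right-hand side being non-negative whenever it is stated as a real bound).

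For the equality discussion I will observe that equality in (\ref{bound8}) forces equality in both Lemma \ref{lemma5} and Theorem \ref{coro1} simultaneously. Lemma \ref{lemma5} forces $G_S$ to be totally disconnected with $\sigma \in \{0, n\}$, while Theorem \ref{coro1} forces $G_S = \widehat{K_\sigma}\cup(n-\sigma)K_1$. I will split into the two possible values of $\sigma$. If $\sigma=0$, both conditions are compatible and give $G_S = nK_1$. If $\sigma=n$, Lemma \ref{lemma5} requires no edges (so $G_S = n\widehat{K_1}$) whereas Theorem \ref{coro1} requires $G_S = \widehat{K_n}$; the two descriptions coincide only when $n=1$, yielding $G_S = \widehat{K_1}$. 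Conversely, a quick direct check for $nK_1$ and $\widehat{K_1}$ confirms that both sides vanish, so equality indeed holds.

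The only subtle point is the equality analysis, where one has to reconcile two seemingly different characterizations coming from Lemmas \ref{lemma5} and \ref{coro1}; the reconciliation via the case $\sigma=n$ collapsing to $n=1$ is where care is needed. The rest is a direct combination of the two previously established bounds with the expansion of $E(G_S)^2$.
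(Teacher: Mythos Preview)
Your proposal is correct and follows essentially the same route as the paper: expand $E(G_S)^2$, apply Lemma~\ref{lemma1} and Lemma~\ref{lemma5} to reach $E(G_S)^2\geq 4m+2\sigma-2\sigma^2/n$, then invoke Theorem~\ref{coro1} to replace $2(2m+\sigma)$ by $2\lambda_1^2$. Your equality analysis is in fact slightly more explicit than the paper's, since you spell out why the case $\sigma=n$ collapses to $n=1$ when reconciling $n\widehat{K_1}$ with $\widehat{K_n}$.
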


\begin{proof}[Proof]
By applying Lemma \ref{lemma5} and Corollary \ref{coro1}, the following inequality is given
    \begin{align} 
        E(G_{S})^{2} &= \left(\sum_{i=1}^{n}\left| \lambda_{i}-\frac{\sigma}{n} \right|\right)^{2} = \sum_{i=1}^{n}\left| \lambda_{i}-\frac{\sigma}{n} \right|^{2} + 2\sum_{i<j}\left| \lambda_{i}-\frac{\sigma}{n} \right|\left| \lambda_{j}-\frac{\sigma}{n} \right| \nonumber \\
        &\geq 2m+\sigma-\frac{\sigma^{2}}{n}+2m+\frac{\sigma(n-\sigma)}{n} \nonumber \\
        &= 4m+2\sigma-\frac{2\sigma^{2}}{n} \nonumber \\
        &\geq 2\lambda_{1}^{2}- \frac{2\sigma^{2}}{n}.   \nonumber
  \end{align}
Moreover,  the equality holds in Lemma \ref{lemma5} and Theorem \ref{coro1} if and only if $G_{S}=nK_{1}$ or $n\widehat{K_{1}}$ and $G_{S} =\widehat{K_{\sigma}} \cup (n-\sigma)K_{1}$, respectively. Thus, the equality holds in (\ref{bound8}) if and only if $G_{S}$ = $nK_{1}$ or $\widehat{K_{1}}$. 
\end{proof}

Next, we introduce a special inequality called Ozeki's inequality.

\begin{theorem}\cite[Ozeki's inequality]{Ref15} \label{theorem9}
    Let $a = (a_1, \dots, a_n)$ and $b = (b_1, \dots , b_n)$ be $n\mbox{-}$tuples of real numbers satisfying
    \begin{align*}
        0 \leq m_{1} \leq a_{i} \leq M_{1}  \quad and \quad 0 \leq m_{2} \leq b_{i} \leq M_{2} \quad (i=1,\dots,n),
    \end{align*}
    where $m_{1}=\underset{i=1,\dotsm,n}{\rm{min}} \, a_{i}$, $M_{1}=\underset{i=1,\dotsm,n}{\rm{max}} \, a_{i}$ and $m_{2}=\underset{i=1,\dotsm,n}{\rm{min}} \, b_{i}$, $M_{2}=\underset{i=1,\dotsm,n}{\rm{max}} \, b_{i}$. Then
\begin{align*}
    \left(\sum_{i=1}^{n}a_{i}^{2}\right)\left(\sum_{i=1}^{n}b_{i}^{2}\right)-\left(\sum_{i=1}^{n}a_{i}b_{i}\right)^{2} \leq \frac{n^{2}}{3}\left( M_{1}M_{2}-m_{1}m_{2}\right)^{2}.
\end{align*}
\end{theorem}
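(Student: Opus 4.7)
The plan is to combine Lagrange's identity with a convexity reduction to extreme configurations, followed by a finite-dimensional optimization on a simplex. By Lagrange's identity (already invoked in the proof of Theorem \ref{theorem2}),
\begin{align*}
\left(\sum_{i=1}^{n} a_i^2\right)\!\left(\sum_{i=1}^{n} b_i^2\right) - \left(\sum_{i=1}^{n} a_i b_i\right)^2 = \sum_{1\le i<j\le n} (a_i b_j - a_j b_i)^2,
\end{align*}
so it suffices to bound $F(a,b) := \sum_{i<j}(a_i b_j - a_j b_i)^2$ from above on the box $[m_1,M_1]^n \times [m_2,M_2]^n$ by $\tfrac{n^2}{3}K^2$, where $K := M_1 M_2 - m_1 m_2$.

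Next, I would exploit convexity: for each index $k$, with every other coordinate fixed, $F$ is a convex quadratic in $a_k$ (the coefficient of $a_k^2$ equals $\sum_{j\ne k} b_j^2 \ge 0$), and similarly in $b_k$. Hence the maximum of $F$ over the box is attained at a vertex, at which each pair $(a_i, b_i)$ lies in the four-element set $\{m_1, M_1\} \times \{m_2, M_2\}$. I would then parametrize such extreme configurations by the counts $p_1, p_2, p_3, p_4$ of indices at the corners $(m_1, m_2)$, $(m_1, M_2)$, $(M_1, m_2)$, $(M_1, M_2)$, subject to $p_1+p_2+p_3+p_4 = n$. Direct expansion rewrites $F$ as a quadratic form $\sum_{1 \le s < t \le 4} u_{st}^2 \, p_s p_t$; the critical coefficient is $u_{23}^2 = K^2$, while the algebraic identity $K^2 - u_{14}^2 = (M_1^2 - m_1^2)(M_2^2 - m_2^2) \ge 0$ and the companion bounds $M_2(M_1 - m_1) \le K$, $M_1(M_2 - m_2) \le K$ give strict slack on every other coefficient.

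The main obstacle is the final optimization: showing $\sum_{s<t} u_{st}^2 \, p_s p_t \le \tfrac{n^2}{3} K^2$ uniformly on the simplex $\{p \ge 0,\, \sum p_i = n\}$. The naive estimate $u_{st}^2 \le K^2$ combined with $\sum_{s<t} p_s p_t \le \tfrac{3n^2}{8}$ (which follows from $\sum p_i^2 \ge n^2/4$) only yields $\tfrac{3n^2}{8}K^2$, strictly weaker than the desired $\tfrac{n^2}{3}K^2$; one must therefore genuinely exploit the slack in the non-$(2,3)$ coefficients. I would address this via Lagrange multipliers on the closed simplex. At the candidate interior optimum $p_1 = 0$, $p_2 = p_3 = p_4 = n/3$, the value of $F$ equals $\tfrac{n^2}{9}(u_{23}^2 + u_{24}^2 + u_{34}^2) \le \tfrac{n^2}{9}\cdot 3K^2 = \tfrac{n^2}{3}K^2$ by the slack inequalities above, while the boundary faces (where further $p_i = 0$) each collapse to a two-variable AM-GM bound dominated by $\tfrac{n^2}{4}K^2 < \tfrac{n^2}{3}K^2$. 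The remaining technical heart of the argument is verifying that no other interior critical point of the asymmetric four-variable quadratic exceeds this value, which I would dispose of by a direct case analysis on which of the $p_i$ vanish.
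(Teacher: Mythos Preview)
The paper does not prove this theorem at all: it is quoted verbatim from \cite{Ref15} (Izumino--Mori--Seo) and used as a black box in the proof of Theorem~\ref{th3}. There is therefore no ``paper's proof'' to compare your attempt against.

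On the merits of your argument itself: the first two moves are correct and clean. Lagrange's identity reduces the problem to bounding $F(a,b)=\sum_{i<j}(a_ib_j-a_jb_i)^2$, and the coordinatewise convexity of $F$ (the coefficient of $a_k^2$ is $\sum_{j\ne k}b_j^2\ge 0$, likewise for $b_k$) legitimately pushes the maximum to a vertex of the box, so that each $(a_i,b_i)$ lies in $\{m_1,M_1\}\times\{m_2,M_2\}$. Your parametrisation by the corner counts $p_1,p_2,p_3,p_4$ and your identification $u_{23}^2=K^2$, together with the coefficient bounds $u_{st}^2\le K^2$, are all correct.

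The gap is in the last step, and it is not merely cosmetic. You evaluate $F$ at the single point $p_1=0$, $p_2=p_3=p_4=n/3$ and observe that it is at most $\tfrac{n^2}{3}K^2$, but you never argue that this point (or anything close to it) is where the maximum occurs. On the face $\{p_1=0\}$ the critical point of $u_{23}^2p_2p_3+u_{24}^2p_2p_4+u_{34}^2p_3p_4$ under $p_2+p_3+p_4=n$ is \emph{not} the centroid unless the three coefficients coincide; it is determined by a linear system whose solution depends on $m_1,m_2,M_1,M_2$, and you would still need to bound the value of $F$ there by $\tfrac{n^2}{3}K^2$ as an identity in those four parameters. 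The same issue arises on the other three facets and, more seriously, in the full interior of the $4$-simplex, which your proposed ``case analysis on which of the $p_i$ vanish'' does not touch at all. So the optimisation that you flag as the ``technical heart'' is genuinely the missing piece, and your sketch does not yet indicate how the algebra closes. The strategy is plausible, but as written it is a programme rather than a proof.
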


By using Ozeki's inequality, a lower bound is presented below.

\begin{theorem} \label{th3}
    Let $G_{S}$ be a graph with n vertices, m edges and $\sigma$ self-loops;  $\lambda_{1},\dots,\lambda_{n}$ be the adjacency eigenvalues of $G_{S}$ such that  
      $  \left| \lambda_{1}-\frac{\sigma}{n}\right| \geq \dots \geq \left| \lambda_{n}-\frac{\sigma}{n}\right|.$
    Then
    \begin{align*}
        E(G_{S})\geq \sqrt{n\left(2m+\sigma-\frac{\sigma^{2}}{n}\right)-\frac{n^{2}}{3}\left(\left| \lambda_{1}-\frac{\sigma}{n} \right| - \left| \lambda_{n}-\frac{\sigma}{n} \right|\right)^{2}}.
    \end{align*}
\end{theorem}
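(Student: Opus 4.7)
The plan is to apply Ozeki's inequality (Theorem~\ref{theorem9}) with exactly the same choice of sequences that drove the upper-bound proof of Theorem~\ref{theorem2}, namely $a_i = 1$ and $b_i = \left|\lambda_i - \frac{\sigma}{n}\right|$ for $i=1,\dots,n$. The strategy is essentially dual to that earlier argument: Lagrange's identity produced an equality whose right-hand side we bounded \emph{from below} in order to get an \emph{upper} bound on $E(G_S)^{2}$; Ozeki's inequality bounds precisely the same Lagrange-type quantity \emph{from above}, and will therefore yield a \emph{lower} bound on $E(G_S)^{2}$.

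First I would check the hypotheses of Ozeki's inequality. With $a_i = 1$ one has $m_1 = M_1 = 1 \geq 0$. The sequence $b_i = \left|\lambda_i - \frac{\sigma}{n}\right|$ is non-negative by construction, and the ordering assumption $\left|\lambda_1-\frac{\sigma}{n}\right| \geq \dots \geq \left|\lambda_n-\frac{\sigma}{n}\right|$ identifies $m_2 = \left|\lambda_n - \frac{\sigma}{n}\right|$ and $M_2 = \left|\lambda_1 - \frac{\sigma}{n}\right|$, both non-negative. Then Lemma~\ref{lemma1} supplies the three sums appearing in Ozeki's inequality: $\sum_i a_i^2 = n$, $\sum_i b_i^2 = 2m+\sigma-\sigma^{2}/n$, and by definition $\sum_i a_i b_i = E(G_S)$.

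Substituting these evaluations into the conclusion of Theorem~\ref{theorem9} gives
\begin{equation*}
n\left(2m+\sigma-\frac{\sigma^{2}}{n}\right) - E(G_S)^{2} \;\leq\; \frac{n^{2}}{3}\left(\left|\lambda_{1}-\frac{\sigma}{n}\right| - \left|\lambda_{n}-\frac{\sigma}{n}\right|\right)^{2},
\end{equation*}
which rearranges to $E(G_S)^{2} \geq n\left(2m+\sigma-\frac{\sigma^{2}}{n}\right) - \frac{n^{2}}{3}\left(\left|\lambda_{1}-\frac{\sigma}{n}\right| - \left|\lambda_{n}-\frac{\sigma}{n}\right|\right)^{2}$. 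Taking the square root produces the claimed bound; if the right-hand side under the radical happens to be negative, the bound is vacuously true because $E(G_S) \geq 0$.

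There is no real obstacle in this proof. The only point that demands any care is the translation of notation between Ozeki's inequality and the spectral setting, and a brief observation about the sign of the radicand to ensure the inequality is well-posed.
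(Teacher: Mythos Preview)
Your proof is correct and essentially identical to the paper's: both apply Ozeki's inequality with one sequence constant equal to $1$ and the other equal to $\left|\lambda_i-\frac{\sigma}{n}\right|$, then invoke Lemma~\ref{lemma1} and rearrange. The only cosmetic difference is that the paper takes $a_i=\left|\lambda_i-\frac{\sigma}{n}\right|$ and $b_i=1$ rather than the reverse, and it does not include your remark about the radicand possibly being negative.
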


\begin{proof}[Proof]
     By Theorem \ref{theorem9}, setting $a_{i}=\left| \lambda_{i} - \frac{\sigma}{n} \right|$, $b_i=1$, for $i=1,2,\dots,n$. Then $m_{1}=\underset{i=1,\dotsm,n}{\rm{min}} \, a_{i}=\left| \lambda_{n}-\frac{\sigma}{n} \right|$, $M_{1}=\underset{i=1,\dotsm,n}{\rm{max}} \, a_{i}=\left| \lambda_{1}-\frac{\sigma}{n} \right|$, $m_{2}=\underset{i=1,\dotsm,n}{\rm{min}} \, b_{i}=1$, $M_{2}=\underset{i=1,\dotsm,n}{\rm{max}} \, b_{i}=1$ and 
     \begin{align*}
         n\sum_{i=1}^{n} \left| \lambda_{i}-\frac{\sigma}{n} \right|^{2} - \left(\sum_{i=1}^{n}\left| \lambda_{i}-\frac{\sigma}{n}\right| \right)^{2}   \leq  \frac{n^{2}}{3}\left(\left| \lambda_{1}-\frac{\sigma}{n} \right| - \left| \lambda_{n}-\frac{\sigma}{n} \right|\right)^{2}.
    \end{align*} 
    Then by the definition of self-loop graph energy and Lemma \ref{lemma1}, it follows that
      \begin{align*}   
          E(G_{S})^{2} \geq n\left(2m+\sigma-\frac{\sigma^{2}}{n}\right)-\frac{n^{2}}{3}\left(\left| \lambda_{1}-\frac{\sigma}{n} \right| - \left| \lambda_{n}-\frac{\sigma}{n} \right|\right)^{2}.
    \end{align*} 
    That is
          \begin{align*}   
          E(G_{S}) \geq \sqrt{n\left(2m+\sigma-\frac{\sigma^{2}}{n}\right)-\frac{n^{2}}{3}\left(\left| \lambda_{1}-\frac{\sigma}{n} \right| - \left| \lambda_{n}-\frac{\sigma}{n} \right|\right)^{2}}. 
    \end{align*} 
\end{proof}

Let $\mathbf{x} = (x_1,\dots, x_n)$ be an arbitrary $n\mbox{-}$tuple of real numbers and the ultimate energy associated with $\mathbf{x}$ is defined as $UE(\mathbf{x})=\sum_{i=1}^{n}\left|x_{i}-\bar{x}\right|$ by Gutman\cite{Ref8}, where $\bar{x}$ is their arithmetic mean. 

\begin{theorem}\cite{Ref6} \label{lemma11}
    Let $\mathbf{x} = (x_1,\dots, x_n)$ be an arbitrary $n\mbox{-}$tuple of real numbers, where not all $x_1,\dots, x_n$ are equal. Then
\begin{align*}
    UE(\mathbf{x}) \geq \frac{2\sum_{i=1}^{n}(x_{i}-\bar{x})^{2}}{{\rm{max}}_{1\leq i \leq n}x_{i}-{\rm{min}}_{1\leq i \leq n}x_{i}},
\end{align*}
with equality if and only if $x_k = {\rm{max}}_{1\leq i \leq n}x_{i}$ when $x_k > \Bar{x}$ and $x_{k}={\rm{min}}_{1\leq i \leq n}x_{i}$ when $x_k < \bar{x}$.
\end{theorem}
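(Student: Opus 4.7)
The plan is to recenter the variables by setting $y_i = x_i - \bar{x}$, turning the target inequality into
\[\sum_{i=1}^{n}|y_i| \;\geq\; \frac{2\sum_{i=1}^{n} y_i^{2}}{M-m},\]
with $M = \max_i x_i$, $m = \min_i x_i$, together with the crucial side condition $\sum_{i=1}^n y_i = 0$. The translation leaves $M - m$ unchanged, and the zero-sum condition will be the main lever.

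First I would partition the indices into $P = \{i : y_i > 0\}$ and $N = \{i : y_i < 0\}$, noting that indices with $y_i = 0$ contribute to neither side. The identity $\sum_i y_i = 0$ then gives $\sum_{i \in P} y_i = -\sum_{i \in N} y_i = \tfrac{1}{2}\sum_{i=1}^n |y_i|$. Next I would exploit the pointwise quadratic-versus-linear bounds
\[y_i^{2} \leq (M-\bar{x})\, y_i \quad (i \in P), \qquad y_i^{2} \leq (\bar{x}-m)(-y_i) \quad (i \in N),\]
which are immediate from $0 < y_i \leq M - \bar{x}$ and $0 < -y_i \leq \bar{x} - m$, respectively. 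Summing over all $i$ and applying the two identities from the previous step yields
\[\sum_{i=1}^n y_i^{2} \;\leq\; (M-\bar{x}) \sum_{i \in P} y_i + (\bar{x}-m)\sum_{i \in N}(-y_i) \;=\; \frac{M-m}{2}\sum_{i=1}^n |y_i|,\]
after which dividing by $(M-m)/2$ delivers the desired lower bound. The hypothesis that not all $x_i$ are equal ensures $M > m$, so the denominator is positive.

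For the equality characterization, each pointwise bound must be tight: $y_i^{2} = (M-\bar{x})\, y_i$ forces $y_i = M - \bar{x}$ (i.e., $x_i = M$) whenever $y_i > 0$, and symmetrically $x_i = m$ whenever $y_i < 0$, matching the stated condition exactly. I do not expect a significant obstacle; the argument is short and entirely elementary. The only point that calls for a little care is the equality analysis, where one must observe that indices with $y_i = 0$ impose no constraint, so the characterization need only pin down the strictly positive and strictly negative deviations.
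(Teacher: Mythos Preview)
Your argument is correct and cleanly executed; the recentering, the partition into positive and negative deviations, the pointwise bounds $y_i^2 \leq (M-\bar{x})y_i$ and $y_i^2 \leq (\bar{x}-m)(-y_i)$, and the use of $\sum_{i\in P}y_i = \sum_{i\in N}(-y_i) = \tfrac12\sum_i|y_i|$ combine exactly as you describe, and the equality analysis is accurate.

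Note, however, that in this paper the theorem is merely \emph{quoted} from \cite{Ref6} and is not proved here; the authors invoke it as a black box to derive Theorem~\ref{theorem22}. So there is no in-paper proof to compare your approach against. Your self-contained elementary proof is a welcome addition, and is in fact essentially the argument given in the cited source.
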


The following Theorem \ref{theorem22} was recently reported in \cite{Ref17} by Liu et al. But it was not mentioned when the equality holds. For completeness, here we provide a different proof by using the notion of ultimate energy and Theorem \ref{lemma11} proposed in \cite{Ref6} by Cai et al.

\begin{theorem} \label{theorem22}
    Let $G_{S}$ be a graph with n vertices, m edges and $\sigma$ self-loops; $\lambda_{1} \geq \dots \geq \lambda_{n}$ be the adjacency eigenvalues of $G_{S}$. Then
\begin{align*}
    E(G_{S}) \geq \frac{4m+2\sigma-\frac{2\sigma^{2}}{n}}{\lambda_{1}-\lambda_{n}},
\end{align*}
the equality holds if and only if $\lambda_k = {\rm{max}}_{1\leq i \leq n}\lambda_{i}$ when $\lambda_k > \frac{\sigma}{n}$ and $\lambda_{k}={\rm{min}}_{1\leq i \leq n}\lambda_{i}$ when $\lambda_k < \frac{\sigma}{n}$.
\end{theorem}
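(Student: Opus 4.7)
The plan is to apply Theorem \ref{lemma11} directly to the $n$-tuple of adjacency eigenvalues $\mathbf{x}=(\lambda_1,\dots,\lambda_n)$, converting each ingredient on the right-hand side of that inequality into the quantities appearing in the claimed bound via Lemma \ref{lemma1}.

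First I would compute the arithmetic mean: by Lemma \ref{lemma1}, $\bar{x}=\frac{1}{n}\sum_{i=1}^n \lambda_i=\frac{\sigma}{n}$. Consequently the ultimate energy of $\mathbf{x}$ coincides with the self-loop energy,
\begin{align*}
UE(\mathbf{x})=\sum_{i=1}^n\left|\lambda_i-\frac{\sigma}{n}\right|=E(G_S).
\end{align*}
Next, using the second identity of Lemma \ref{lemma1}, the sum of squared deviations becomes
\begin{align*}
\sum_{i=1}^n\left(\lambda_i-\frac{\sigma}{n}\right)^2=\sum_{i=1}^n\lambda_i^2-\frac{\sigma^2}{n}=2m+\sigma-\frac{\sigma^2}{n}.
\end{align*}
Finally, since $\lambda_1\geq\dots\geq\lambda_n$, we have $\max_i\lambda_i-\min_i\lambda_i=\lambda_1-\lambda_n$. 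Substituting these three observations into the conclusion of Theorem \ref{lemma11} yields exactly
\begin{align*}
E(G_S)\geq \frac{2\left(2m+\sigma-\frac{\sigma^2}{n}\right)}{\lambda_1-\lambda_n}=\frac{4m+2\sigma-\frac{2\sigma^2}{n}}{\lambda_1-\lambda_n}.
\end{align*}

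For the equality characterization, I would simply transcribe the equality condition from Theorem \ref{lemma11}, which says that equality holds iff every $x_k$ strictly above $\bar{x}$ equals the maximum and every $x_k$ strictly below $\bar{x}$ equals the minimum. With $\bar{x}=\sigma/n$, this becomes exactly the stated condition: $\lambda_k=\max_i\lambda_i$ whenever $\lambda_k>\sigma/n$ and $\lambda_k=\min_i\lambda_i$ whenever $\lambda_k<\sigma/n$. The only mild wrinkle is the degenerate regime where all $\lambda_i$ are equal (so $\lambda_1=\lambda_n=\sigma/n$), in which case the denominator vanishes; here I would note separately that both sides of the inequality are zero (trivially) and that the equality condition is vacuously satisfied, so the statement is consistent. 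This is really the only subtle point; the bulk of the argument is a direct substitution and the main obstacle is purely bookkeeping: making sure the ultimate-energy framework of \cite{Ref6} is set up with the right mean and the right range.
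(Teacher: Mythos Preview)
Your proof is correct and follows essentially the same route as the paper: apply Theorem~\ref{lemma11} to $\mathbf{x}=(\lambda_1,\dots,\lambda_n)$, identify $\bar{x}=\sigma/n$ and $UE(\mathbf{x})=E(G_S)$ via Lemma~\ref{lemma1}, and read off both the bound and the equality condition. Your extra remark on the degenerate case $\lambda_1=\lambda_n$ is a reasonable caveat (though note the right-hand side is then $0/0$, not $0$); the paper simply omits this case.
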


\begin{proof}[Proof]
      By Theorem \ref{lemma11}, let $\mathbf{x} = (\lambda_1,\dots, \lambda_n)$, then $UE(\mathbf{x})=E(G_{S})=\sum_{i=1}^{n}\left|\lambda_{i}-\frac{\sigma}{n}\right|$. Let $\lambda_{1} = {\rm{max}}_{1\leq i \leq n}\lambda_{i}$, $\lambda_{n}={\rm{min}}_{1\leq i \leq n}\lambda_{i}$, then by Lemma \ref{lemma1}, we have the following inequality
      \begin{align*}
          E(G_{S}) \geq \frac{2\sum_{i=1}^{n}\left|\lambda_{i}-\frac{\sigma}{n}\right|^{2}}{\lambda_{1}-\lambda_{n}} = \frac{4m+2\sigma-\frac{2\sigma^{2}}{n}}{\lambda_{1}-\lambda_{n}},
      \end{align*}
the equality holds if and only if $\lambda_{k}=\lambda_{1}$ if $\lambda_{k} > \frac{\sigma}{n}$ and $\lambda_{k}=\lambda_{n}$ if $\lambda_{k} < \frac{\sigma}{n}$.
\end{proof}

\section*{Acknowledgments}
This work was partially supported by the National Natural Science Foundation of China (No.12171088).

\end{document}